\begin{document}
\title{Affine-invariant midrange statistics\thanks{This work has received support from the European Research Council under the Advanced ERC Grant Agreement Switchlet n.670645. Cyrus Mostajeran is supported by the Cambridge Philosophical Society.}}
%
%\titlerunning{Abbreviated paper title}
% If the paper title is too long for the running head, you can set
% an abbreviated paper title here
%
\author{Cyrus Mostajeran\and
Christian Grussler\and
Rodolphe Sepulchre}
\authorrunning{C. Mostajeran et al.}
% First names are abbreviated in the running head.
% If there are more than two authors, 'et al.' is used.
%
\institute{Department of Engineering,
University of Cambridge, 
Cambridge CB2 1PZ,
UK
%\email{csm54@cam.ac.uk}\\
}
\maketitle              % typeset the header of the contribution
\begin{abstract}
We formulate and discuss the affine-invariant matrix midrange problem on the cone of $n\times n$ positive definite Hermitian matrices $\mathbb{P}(n)$, which is based on the Thompson metric. A particular computationally efficient midpoint of this metric is investigated as a highly scalable candidate for an average of two positive definite matrices within this context, before studying the $N$-point problem in the vector and matrix settings.

\keywords{Positive definite matrices  \and Statistics \and Optimization}
\end{abstract}
\section{Introduction}

In this paper, we develop a framework for affine-invariant midrange statistics on the cone of positive definite Hermitian matrices $\mathbb{P}(n)$ of dimension $n$. In Subsection \ref{subsec 1}, we briefly note the basic elements of Finsler geometry relevant to the problem. In Subsection \ref{subsec 2}, we define the affine-invariant midrange problem for a collection of $N$ points. In Section \ref{sec: 2}, we study a particular midrange of two positive definite matrices arising as the midpoint of a suitable geodesic curve. In Section \ref{sec: 3}, we briefly review the scalar and vector affine-invariant midrange problem before returning to the $N$-point problem on $\mathbb{P}(n)$ in Section \ref{sec: 4}.

\subsection{Affine-invariant Finsler metrics on $\mathbb{P}(n)$}
\label{subsec 1}

Consider the family of affine-invariant metric distances $d_{\Phi}$ on $\mathbb{P}(n)$ defined as 
\begin{equation} \label{d Phi}
d_{\Phi}(A,B)=\|\log A^{-1/2}BA^{-1/2} \|_{\Phi},
\end{equation}
where $\|\cdot\|_{\Phi}$ is any unitarily invariant norm on the space of Hermitian matrices of dimension $n$ defined by
$
\|X\|_{\Phi}:=\Phi(\lambda_1(X),\ldots,\lambda_n(X)),
$
with $\lambda_{\min}(X)=\lambda_n(X)\leq \ldots \leq \lambda_1(X) = \lambda_{\max}(X)$ denoting the $n$ real eigenvalues of $X$ and
$\Phi$ a symmetric gauge norm on $\mathbb{R}^n$ (i.e. a norm that is invariant under permutations and sign changes of the coordinates) \cite{Bhatia2003}. For any such $\Phi$, $X$ is said to be a $d_{\Phi}$-midpoint of $A$ and $B$ if
\begin{equation}
d_{\Phi}(A,X)=d_{\Phi}(X,B)=\frac{1}{2}d_{\Phi}(A,B).
\end{equation}
The curve $\gamma_{\mathcal{G}}:[0,1]\rightarrow\mathbb{P}(n)$ defined by 
\begin{equation} \label{R geodesic}
\gamma_{\mathcal{G}}(t)=A^{1/2}\left(A^{-1/2}BA^{-1/2}\right)^t A^{1/2}
\end{equation}
is geometrically significant as a minimal geodesic for any of the affine-invariant metrics $d_{\Phi}$ \cite{Bhatia2003}. The midpoint of $\gamma_{\mathcal{G}}$ is the matrix geometric mean $A\#B$, which is a metric midpoint in the sense of $d_{\Phi}(A,A\#B)=d_{\Phi}(A\#B,B)=\frac{1}{2}d_{\Phi}(A,B)$ for any $\Phi$. For the choice of $\Phi(x_1,\ldots,x_n)=(\sum_i x_i^2)^{1/2}$, $d_2:=d_{\Phi}$ corresponds to the metric distance generated by the standard affine-invariant Riemannian structure given by $\langle X,Y \rangle_{\Sigma}=\operatorname{tr}(\Sigma^{-1}X\Sigma^{-1}Y)$ for $\Sigma\in\mathbb{P}(n)$ and Hermitian matrices $X,Y\in T_{\Sigma}\mathbb{P}(n)$. For the choice of $\Phi(x_1,\ldots,x_n)=\max_i|x_i|$ on the other hand, $d_{\infty}:=d_{\Phi}$ yields the distance function that coincides with the Thompson metric \cite{Thompson1963} on the cone $\mathbb{P}(n)$
\begin{equation} \label{d inf}
d_{\infty}(A,B)=\|\log A^{-1/2}BA^{-1/2}\|_{\infty}=\max\{\log\lambda_{\max}(BA^{-1}),\,\log\lambda_{\max}(AB^{-1})\}.
\end{equation}
While the minimal geodesic $\gamma_{\mathcal{G}}$ in (\ref{R geodesic}) and the midpoint is unique for the Riemannian distance function $d_2$, it is not unique with respect to the $d_{\infty}$ metric which generally admits infinitely many minimal geodesics and midpoints between a given pair of matrices $A,B\in\mathbb{P}(n)$ \cite{Nussbaum1994}, as expected from the analogous picture concerning the associated norms in $\mathbb{R}^n$. As we shall see in Section \ref{sec: 2}, some of these minimal geodesics are much more readily constructible than others from a computational standpoint and yield midpoints that satisfy many of the properties expected of an affine-invariant matrix mean. Specifically, we will use the midpoint of a particular  minimal geodesic of $d_{\infty}$ from a construction by Nussbaum as a scalable relaxation of the matrix geometric mean that is much cheaper to construct than $A\#B$.

\subsection{The affine-invariant midrange problem}
\label{subsec 2}

Given a collection of $N$ points $Y_i$ in $\mathbb{P}(n)$, the midrange problem can be formulated as the following optimization problem
\begin{align}  \label{midrange}
\min_{X \succeq 0} \; \max_i \; d_{\infty}(X,Y_i).
\end{align}
We call a solution $X^{\star}$ to the above problem a midrange of $\{Y_i\}$.

\begin{proposition}
Let $X^{\star}$ be a solution to  (\ref{midrange}) with optimal cost $t^{\star} = f(X^{\star}) = \max_i \; d_{\infty}(X^{\star},Y_i)$. We have $l \leq t^{\star} \leq u$, where the lower and upper bounds are given by
\begin{equation}
l = \frac{1}{2}\operatorname{diam}_{\infty}(\{Y_i\}):=\frac{1}{2}\max_{i,j}d_{\infty}(Y_i,Y_j), \quad u = \min_i \max _j d_{\infty}(Y_i,Y_j) \leq 2l.
\end{equation}
\end{proposition}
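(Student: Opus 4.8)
The plan is to handle the three inequalities separately, relying only on the fact that $d_{\infty}$ is a genuine metric on $\mathbb{P}(n)$ (so that the triangle inequality is available) together with elementary monotonicity properties of the $\min$ and $\max$ operations. No properties specific to the Thompson metric beyond the metric axioms will be needed, so the same bounds would hold verbatim for any $d_{\Phi}$.

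For the lower bound $l \le t^{\star}$, I would fix an arbitrary feasible $X \succeq 0$ and any pair of indices $i,j$, and apply the triangle inequality in the form $d_{\infty}(Y_i,Y_j) \le d_{\infty}(Y_i,X) + d_{\infty}(X,Y_j)$. Each summand on the right is bounded above by $\max_k d_{\infty}(X,Y_k)$, giving $d_{\infty}(Y_i,Y_j) \le 2\max_k d_{\infty}(X,Y_k)$ for every $i,j$. Taking the maximum over $i,j$ on the left-hand side then yields $\tfrac{1}{2}\operatorname{diam}_{\infty}(\{Y_i\}) \le \max_k d_{\infty}(X,Y_k)$ for all $X$; since this holds for every feasible point, it holds in particular for the minimizer $X^{\star}$, which is exactly $l \le t^{\star}$. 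For the upper bound $t^{\star} \le u$, I would observe that $t^{\star}$ is the infimum of the objective $X \mapsto \max_j d_{\infty}(X,Y_j)$ over all of $\mathbb{P}(n)$, whereas $u = \min_i \max_j d_{\infty}(Y_i,Y_j)$ is the value of the same objective minimized only over the finite set of candidate centers $\{Y_i\}$. Because an infimum over a set never exceeds the infimum over a subset, $t^{\star} \le u$ follows immediately. The remaining estimate $u \le 2l$ is equally direct: for each fixed $i$ one has $\max_j d_{\infty}(Y_i,Y_j) \le \max_{k,j} d_{\infty}(Y_k,Y_j) = \operatorname{diam}_{\infty}(\{Y_i\}) = 2l$, and taking the minimum over $i$ preserves the inequality.

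I do not expect any genuine obstacle in this argument: the entire proof rests on the triangle inequality and on the monotonicity of infima and suprema under set inclusion, which is precisely the classical radius-versus-diameter comparison for the $1$-center problem in a metric space. The only subtlety worth a remark is the implicit well-posedness of (\ref{midrange})—that a minimizer $X^{\star}$ actually exists—but this is presupposed by the statement, and in any case the three bounds apply to the optimal \emph{value} $t^{\star}$ whether or not it is attained.
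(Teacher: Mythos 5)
Your argument is correct and is essentially the same as the paper's: the lower bound via the triangle inequality followed by maximizing over $i,j$, and the upper bound by restricting the minimization to the candidate centers $\{Y_i\}$. The only addition is your explicit one-line verification of $u \le 2l$, which the paper states but does not spell out.
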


\begin{proof}
By the triangle inequality, we have for any $i,j = 1, \ldots, N$,
\begin{equation} \label{bounds}
d_{\infty}(Y_i,Y_j) \leq d_{\infty}(Y_i,X^{\star}) + d_{\infty}(X^{\star},Y_j) \leq t^{\star} + t^{\star} = 2t^{\star},
\end{equation}
since $t^{\star}=\max_i d_{\infty}(X^{\star},Y_i)$. Taking the maximum of the left-hand side of (\ref{bounds}) over $i,j$, we arrive at $l=\frac{1}{2}\max_{i,j}d_{\infty}(Y_i,Y_j)\leq t^{\star}$. For the upper bound, note that taking $X=Y_i$ for each $i$, we obtain a cost $f(Y_i)=\max_j d_{\infty}(Y_i,Y_j)$. Since the minimum of these $N$ cost evaluations will still yield an upper bound on the optimum cost $t^{\star}$, we have $t^{\star} \leq u =  \min_i \max _j d_{\infty}(Y_i,Y_j)$.
\qed
\end{proof}

\section{The $2$-point midrange problem in $\mathbb{P}(n)$}
\label{sec: 2}

\subsection{Scalable $d_{\infty}$ geodesic midpoints}

As $\gamma_{\mathcal{G}}$ is a minimal geodesic for the $d_{\infty}$ metric, $A\#B$ lies in the set of midpoints of this metric. In particular,
\begin{equation}
A\#B\in \operatorname{argmin}_{X\in\mathbb{P}(n)}\left(\max_{Y\in\{A,B\}}d_{\infty}(X,Y)\right).
\end{equation}
Recall that the metric distance $d_{\infty}$ coincides with the Thompson metric $d_T$ on the cone $\mathbb{P}(n)$. If $C$ is a closed, solid, pointed, convex cone in a vector space $V$, then $C$ induces a partial order on $V$ given by $x\leq y$ if and only if $y-x\in C$. The Thompson metric on $C$ is defined as
$d_T(x,y)=\log\max\{M(x/y;C),M(y/x;C)\}$,
where $M(y/x;C):=\inf\{\lambda\in\mathbb{R}:y\leq \lambda x\}$ for $x\in C\setminus\{0\}$ and $y\in V$. For $A,B\in\mathbb{P}(n)$, we have $M(A/B)=\lambda_{\max}(AB^{-1})$, so that 
\begin{equation}
d_T(A,B)=\log\max\{\lambda_{\max}(AB^{-1}),\lambda_{\max}(BA^{-1})\},
\end{equation}
which indeed coincides with $d_{\infty}$ in (\ref{d inf}).

It is known that the Thompson metric does not admit unique minimal geodesics. Indeed, a remarkable construction by Nussbaum describes a family of geodesics that generally consists of an infinite number of curves connecting a pair of points in a cone $C$. In particular, setting $\alpha:=1/M(x/y;C)$ and $\beta:=M(y/x;C)$, the curve $\phi:[0,1]\rightarrow C$ given by
\begin{equation} \label{Nussbaum}
\phi(t;x,y):=\begin{dcases}
\left(\frac{\beta^t-\alpha^t}{\beta-\alpha}\right)y+\left(\frac{\beta\alpha^t-\alpha\beta^t}{\beta-\alpha}\right)x \quad &\mathrm{if} \; \alpha\neq\beta, \\
\alpha^t x &\mathrm{if} \; \alpha=\beta,
\end{dcases}
\end{equation}
is always a minimal geodesic from $x$ to $y$ with respect to the Thompson metric. The curve $\phi$ defines a projective straight line in the cone \cite{Nussbaum1994}. If we take $C$ to be the cone of positive semidefinite matrices with interior $\operatorname{int} C= \mathbb{P}(n)$, then for a pair of points $A,B\in\mathbb{P}(n)$, we have $\beta=M(B/A;C)=\lambda_{\max}(BA^{-1})$ and $\alpha=1/M(A/B;C)=\lambda_{\min}(BA^{-1})$. Thus, the minimal geodesic described by (\ref{Nussbaum}) takes the form
\begin{equation} \label{Nussbaum matrix}
\phi(t):=\begin{dcases}
\left(\frac{\lambda_{\max}^t-\lambda_{\min}^t}{\lambda_{\max}-\lambda_{\min}}\right)B+\left(\frac{\lambda_{\max}\lambda_{\min}^t-\lambda_{\min}\lambda_{\max}^t}{\lambda_{\max}-\lambda_{\min}}\right)A &\mathrm{if} \; \lambda_{\min}\neq\lambda_{\max}, \\
\lambda_{\min}^t A &\mathrm{if} \; \lambda_{\min}=\lambda_{\max},
\end{dcases}
\end{equation}
where $\lambda_{\max}$ and $\lambda_{\min}$ denote the largest and smallest eigenvalues of $BA^{-1}$, respectively. Taking the midpoint $t=1/2$ of this geodesic, we arrive at a computationally convenient $d_{\infty}$-midpoint of $A$ and $B$, which we will denote by $A*B$.
\begin{proposition}
For $A,B\in\mathbb{P}(n)$, we have
\begin{equation} \label{star}
A*B=\frac{1}{\sqrt{\lambda_{\min}}+\sqrt{\lambda_{\max}}}\left(B+\sqrt{\lambda_{\min}\lambda_{\max}} A\right).
\end{equation}
\end{proposition}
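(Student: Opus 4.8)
The plan is to obtain $A*B$ by direct substitution of $t=1/2$ into the Nussbaum geodesic (\ref{Nussbaum matrix}) and then simplify the two scalar coefficients algebraically. First I would write, in the generic case $\lambda_{\min}\neq\lambda_{\max}$,
\begin{equation}
A*B=\phi(1/2)=\left(\frac{\sqrt{\lambda_{\max}}-\sqrt{\lambda_{\min}}}{\lambda_{\max}-\lambda_{\min}}\right)B+\left(\frac{\lambda_{\max}\sqrt{\lambda_{\min}}-\lambda_{\min}\sqrt{\lambda_{\max}}}{\lambda_{\max}-\lambda_{\min}}\right)A,
\end{equation}
so that the whole computation reduces to simplifying the coefficients of $B$ and of $A$.

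The key step is the difference-of-squares factorization $\lambda_{\max}-\lambda_{\min}=(\sqrt{\lambda_{\max}}-\sqrt{\lambda_{\min}})(\sqrt{\lambda_{\max}}+\sqrt{\lambda_{\min}})$. Applying it to the coefficient of $B$ cancels the factor $\sqrt{\lambda_{\max}}-\sqrt{\lambda_{\min}}$ and leaves $1/(\sqrt{\lambda_{\min}}+\sqrt{\lambda_{\max}})$. For the coefficient of $A$ I would first factor the numerator as $\lambda_{\max}\sqrt{\lambda_{\min}}-\lambda_{\min}\sqrt{\lambda_{\max}}=\sqrt{\lambda_{\min}\lambda_{\max}}\,(\sqrt{\lambda_{\max}}-\sqrt{\lambda_{\min}})$, after which the same cancellation yields $\sqrt{\lambda_{\min}\lambda_{\max}}/(\sqrt{\lambda_{\min}}+\sqrt{\lambda_{\max}})$. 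Collecting the two coefficients reproduces exactly (\ref{star}).

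Finally I would dispose of the degenerate branch $\lambda_{\min}=\lambda_{\max}=:\lambda$, where (\ref{Nussbaum matrix}) gives $\phi(1/2)=\sqrt{\lambda}\,A$. Here $BA^{-1}=\lambda I$, i.e.\ $B=\lambda A$, and substituting this into the right-hand side of (\ref{star}) returns $\frac{1}{2\sqrt{\lambda}}(\lambda A+\lambda A)=\sqrt{\lambda}\,A$, confirming that the single closed form remains valid. Since the statement is purely a simplification of an already-established geodesic, I do not expect a genuine obstacle; the only points requiring care are the factorization of the $A$-coefficient numerator and verifying that (\ref{star}) correctly absorbs the degenerate case.
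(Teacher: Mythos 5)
Your proposal is correct and matches the paper's proof, which likewise obtains (\ref{star}) by setting $t=1/2$ in (\ref{Nussbaum matrix}), simplifying via the difference-of-squares cancellation, and checking separately that the degenerate case $\lambda_{\min}=\lambda_{\max}$ (where $B=\lambda A$, since $BA^{-1}$ is similar to the Hermitian matrix $A^{-1/2}BA^{-1/2}$) is absorbed by the same closed form. You simply spell out the algebra that the paper leaves as ``elementary algebraic simplification.''
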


The result follows from elementary algebraic simplification upon setting $A*B=\phi(1/2;A,B)$ in the case $\lambda_{\min}\neq \lambda_{\max}$. If $\lambda_{\min}=\lambda_{\max}$, then $\phi(1/2;A,B)=\sqrt{\lambda_{\min}}A$ also agrees with the formula in (\ref{star}). The geometry of the set of $d_{\infty}$-midpoints of a given pair of points in $\mathbb{P}(n)$ is studied in detail in \cite{Lim2013}, where it is shown that there is a unique $d_{\infty}$ minimal geodesic from $A$ to $B$ if and only if the spectrum of $BA^{-1}$ lies in a set $\{\lambda,\lambda^{-1}\}$ for some $\lambda>0$. Moreover, it is shown that otherwise there are infinitely many $d_{\infty}$ minimal geodesics from $A$ to $B$, and that the set of $d_{\infty}$-midpoints of $A$ and $B$ is compact and convex in both Riemannian and Euclidean senses \cite{Lim2013}.

We now consider the merits of $A * B$ as a mean of $A$ and $B$. The following are a number of properties that should be satisfied by a sensible notion of a mean $\mu:\mathbb{P}(n)\times\mathbb{P}(n)\rightarrow\mathbb{P}(n)$ of a pair of positive definite matrices \cite{Bhatia,Kubo1980}.
\begin{enumerate}[(i)]
\item Continuity: $\mu$ is a continuous map.
\item Symmetry: $\mu(A,B)=\mu(B,A)$ for all $A,B \in \mathbb{P}(n)$.
\item Affine-invariance: $\mu(XAX^*,XBX^*)=X\mu(A,B)X^*$, for all $X\in GL(n)$.
\item Order property: $A \preceq  B \implies A\preceq  \mu(A,B) \preceq  B$.
\item Monotonicity: $\mu(A,B)$ is monotone in its arguments. 
\end{enumerate}

Note that $X^*$ in (iii) denotes the conjugate transpose of $X$. It is relatively straightforward to show that the map $\mu(A,B):=A*B$ satisfies properties (i)-(iii) listed above. In the remainder of this section, we will turn our attention to the order and monotonicity properties (iv) and (v).

\subsection{Order and monotonicity properties of $\mu(A,B)=A*B$}

Condition (iv) is a generalization of the property of means of positive numbers whereby a mean of a pair of points is expected to lie between the two points on the number line. For Hermitian matrices, a standard partial order $\preceq $ exists according to which $A\preceq  B$ if and only if $B-A$ is positive semidefinite. This partial order is known as the L{\"o}wner order and the monotonicity of condition (v) is also with reference to this order. It is well-known that the L{\"o}wner order is affine-invariant in the sense that for all $A,B\in \mathbb{P}(n)$, $X\in GL(n)$,
$A\preceq  B $ implies that $XAX^* \preceq  XBX^*$.
In particular, $A\preceq  B$ if and only if $I\preceq  A^{-1/2}BA^{-1/2}$. Thus, by affine-invariance of $\mu$, it suffices to prove (iv) in the case where $A=I$ since 
$
A\preceq \mu(A,B) \preceq B$ if and only if $I \preceq \mu(I,A^{-1/2}BA^{-1/2}) \preceq A^{-1/2}BA^{-1/2}$.
To establish condition (iv) for $\mu(A,B)=A*B$, we shall make use of the following lemma whose proof is elementary.
\begin{lemma} \label{shift}
If $c_1,c_2\in\mathbb{R}$ and $M$ is an $n\times n$ matrix with eigenvalues $\lambda_i(M)$, then $c_1M+c_2I$ has eigenvalues $c_1\lambda_i(M)+c_2$.
\end{lemma}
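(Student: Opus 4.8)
The statement is elementary, so the plan is to read the eigenvalues off the characteristic polynomial, which automatically respects algebraic multiplicities. Writing $N := c_1 M + c_2 I$ and $q(\mu) := \det(N - \mu I)$ for its characteristic polynomial, the eigenvalues of $N$ are exactly the roots of $q$ counted with multiplicity, so it suffices to show that
\begin{equation}
q(\mu) = \prod_{i=1}^n \big((c_1 \lambda_i(M) + c_2) - \mu\big).
\end{equation}

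I would first dispose of the degenerate case $c_1 = 0$: here $N = c_2 I$ has $c_2$ as its only eigenvalue with multiplicity $n$, and $c_1 \lambda_i(M) + c_2 = c_2$ for every $i$, so the claim is immediate. For $c_1 \neq 0$, the key algebraic step is the factorization $N - \mu I = c_1\big(M - \tfrac{\mu - c_2}{c_1} I\big)$. Taking determinants and pulling out the scalar gives $q(\mu) = c_1^n\, p_M\!\big(\tfrac{\mu - c_2}{c_1}\big)$, where $p_M(\lambda) = \det(M - \lambda I) = \prod_{i=1}^n (\lambda_i(M) - \lambda)$ is the characteristic polynomial of $M$ in factored form. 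Distributing the factor $c_1^n = \prod_i c_1$ across the product then yields $q(\mu) = \prod_{i=1}^n \big(c_1 \lambda_i(M) - (\mu - c_2)\big)$, which is exactly the desired identity.

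There is no genuine obstacle here; the only points requiring a moment's care are the separate treatment of $c_1 = 0$ and the bookkeeping of algebraic multiplicities. The latter is precisely what motivates working with the characteristic polynomial rather than the more immediate eigenvector argument, in which $Mv = \lambda_i(M) v$ gives $Nv = (c_1 \lambda_i(M) + c_2) v$: the eigenvector route shows that each $c_1 \lambda_i(M) + c_2$ is an eigenvalue of $N$, but confirming that these exhaust the spectrum with the correct multiplicities (in particular when $M$ fails to be diagonalizable) is handled automatically by the determinant computation.
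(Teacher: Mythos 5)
Your proof is correct. The paper omits a proof of this lemma entirely, stating only that it is elementary, so there is no argument to compare against; your characteristic-polynomial computation via the factorization $N-\mu I = c_1\bigl(M-\tfrac{\mu-c_2}{c_1}I\bigr)$ is a complete and careful way to fill the gap, correctly handling both the degenerate case $c_1=0$ and the algebraic multiplicities that a naive eigenvector argument would leave unaddressed.
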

Let $\Sigma\in\mathbb{P}(n)$ be such that $I\preceq  \Sigma$ and note that this is equivalent to $\lambda_i(\Sigma)\geq 1$ for $i=1,\ldots,n$. Writing $\lambda_{\min}=\lambda_{\min}(\Sigma)$ and $\lambda_{\max}=\lambda_{\max}(\Sigma)$, we have by Lemma \ref{shift} that
\begin{align}
\lambda_i(I*\Sigma)-1&=\lambda_i\left(\frac{1}{\sqrt{\lambda_{\min}}+\sqrt{\lambda_{\max}}}\left(\Sigma+\sqrt{\lambda_{\min}\lambda_{\max}} I\right)\right)-1 \\
&=\frac{\lambda_i(\Sigma)+\sqrt{\lambda_{\min}\lambda_{\max}}-\sqrt{\lambda_{\min}}-\sqrt{\lambda_{\max}}}{\sqrt{\lambda_{\min}}+\sqrt{\lambda_{\max}}} \\
&\geq \frac{\left(\sqrt{\lambda_{\min}}+\sqrt{\lambda_{\max}}\right)\left(\sqrt{\lambda_{\min}}-1\right)}{\sqrt{\lambda_{\min}}+\sqrt{\lambda_{\max}}} \geq 0,
\end{align}
since $\lambda_{i}(\Sigma)\geq \lambda_{\min} \geq 1$. Thus, we have shown that  $I\preceq \Sigma$ implies $I \preceq  I*\Sigma$. To prove the other inequality, note that
\begin{align}
\lambda_i(\Sigma-I*\Sigma)&=\lambda_i\left(\left(\frac{\sqrt{\lambda_{\min}}+\sqrt{\lambda_{\max}}-1}{\sqrt{\lambda_{\min}}+\sqrt{\lambda_{\max}}}\right)\Sigma-\frac{\sqrt{\lambda_{\min}\lambda_{\max}}}{\sqrt{\lambda_{\min}}+\sqrt{\lambda_{\max}}} \; I \right) \\
&= \left(\frac{\sqrt{\lambda_{\min}}+\sqrt{\lambda_{\max}}-1}{\sqrt{\lambda_{\min}}+\sqrt{\lambda_{\max}}}\right)\lambda_i(\Sigma)-\frac{\sqrt{\lambda_{\min}\lambda_{\max}}}{\sqrt{\lambda_{\min}}+\sqrt{\lambda_{\max}}} \\ 
&\geq \frac{(\sqrt{\lambda_{\min}}+\sqrt{\lambda_{\max}}-1)\lambda_{\min}-\sqrt{\lambda_{\min}\lambda_{\max}}}{\sqrt{\lambda_{\min}}+\sqrt{\lambda_{\max}}} \\
& = \sqrt{\lambda_{\min}}\left(\sqrt{\lambda_{\min}}-1\right) \geq 0.
\end{align}
Therefore, we have also shown that $\Sigma-I*\Sigma \succeq  0$. That is, 
\begin{equation} \label{ordering fit}
I\preceq  \Sigma \implies I \preceq  I*\Sigma \preceq  \Sigma,
\end{equation}
for all $\Sigma\in\mathbb{P}(n)$. In particular, upon substituting $\Sigma=A^{-1/2}BA^{-1/2}$ in (\ref{ordering fit}) and using the affine-invariance properties of both the L{\"o}wner order and the mean $\mu(A,B)=A*B$, we establish the following important property.
 
\begin{proposition}
 For $A,B\in\mathbb{P}(n)$, $A\preceq B$ implies that $A\preceq  A*B \preceq  B$.
\end{proposition}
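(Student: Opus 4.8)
The plan is to reduce the general statement to the already-established normalized inequality (\ref{ordering fit}) by exploiting the affine-invariance of both the L\"owner order and the mean $\mu(A,B)=A*B$. Since $A\in\mathbb{P}(n)$, its positive definite square root $A^{1/2}$ is invertible and Hermitian, so I would take $X=A^{1/2}\in GL(n)$ as the conjugating element. Writing $\Sigma := A^{-1/2}BA^{-1/2}$, I would first record the two identities $A = A^{1/2}\,I\,A^{1/2}$ and $B = A^{1/2}\,\Sigma\,A^{1/2}$, so that the pair $(A,B)$ is exhibited as the congruence-image of the normalized pair $(I,\Sigma)$ under conjugation by $A^{1/2}$.

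Next I would translate the hypothesis into normalized coordinates: by affine-invariance of the L\"owner order, $A\preceq B$ holds if and only if $I\preceq A^{-1/2}BA^{-1/2}=\Sigma$, which is precisely the condition under which (\ref{ordering fit}) applies. Invoking (\ref{ordering fit}) then immediately yields the chain $I\preceq I*\Sigma\preceq\Sigma$.

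The final step is to transport this chain back by congruence. By property (iii), affine-invariance of the mean, applied with $X=A^{1/2}$, one has $A*B = A^{1/2}(I*\Sigma)A^{1/2}$; conjugating the inequality $I\preceq I*\Sigma\preceq\Sigma$ by $A^{1/2}$ and again appealing to affine-invariance of the L\"owner order gives $A\preceq A*B\preceq B$, which is the claim.

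I do not expect a genuine obstacle here, since the analytic content, namely the eigenvalue estimates underlying (\ref{ordering fit}), has already been carried out via Lemma \ref{shift}. The only point demanding care is the bookkeeping that ensures the congruence $X(\cdot)X^*$ intertwines correctly with the $*$ operation, i.e.\ that $A*B$ really is the congruence-image of $I*\Sigma$ rather than merely some midpoint of the image pair. This is exactly what property (iii) guarantees, so the reduction to (\ref{ordering fit}) is legitimate and the proposition follows.
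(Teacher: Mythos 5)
Your proposal is correct and is essentially identical to the paper's own argument: the paper likewise obtains the proposition by substituting $\Sigma=A^{-1/2}BA^{-1/2}$ into the normalized chain (\ref{ordering fit}) and invoking the affine-invariance of both the L\"owner order and the mean $\mu(A,B)=A*B$. Your only addition is to spell out the congruence bookkeeping ($A*B=A^{1/2}(I*\Sigma)A^{1/2}$), which the paper leaves implicit.
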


We now consider the monotonicity of $\mu$ in its arguments. First recall that a map $F:\mathbb{P}(n)\rightarrow\mathbb{P}(n)$ is said to be monotone if $\Sigma_1\preceq \Sigma_2$ implies that $F(\Sigma_1)\preceq F(\Sigma_2)$. By symmetry and affine-invariance, it is sufficient to consider monotonicity of $\mu(I,\Sigma)$ with respect to $\Sigma$. That is, monotonicity is established by showing that 
\begin{equation}
\Sigma_1\preceq \Sigma_2 \implies I*\Sigma_1\preceq  \ I*\Sigma_2.
\end{equation}
Unfortunately, it turns out that $F(\Sigma):=I*\Sigma$ is not monotone with respect to $\Sigma$ as we shall demonstrate below. Nonetheless, $F$ is seen to enjoy certain weaker monotonicity properties, which is interesting and insightful. Considering the eigenvalues of $I * \Sigma$, we find that
\begin{align} \label{monotonicity eigenvalues}
\lambda_i(I*\Sigma)=\frac{\lambda_i(\Sigma)+\sqrt{\lambda_{\min}\lambda_{\max}}}{\sqrt{\lambda_{\min}}+\sqrt{\lambda_{\max}}},
\end{align}
where $\lambda_{\min}$ and $\lambda_{\max}$ refer to the smallest and largest eigenvalues of $\Sigma$. 
\begin{proposition} \label{min max monotone}
The maximum and minimum eigenvalues of $F(\Sigma)=I*\Sigma$ are monotone with respect to $\Sigma$.
\end{proposition}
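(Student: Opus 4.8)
The plan is to exploit the explicit eigenvalue formula (\ref{monotonicity eigenvalues}) to obtain remarkably simple closed forms for the extreme eigenvalues of $I*\Sigma$, and then reduce the proposition to the classical monotonicity of extreme eigenvalues under the L\"owner order. First I would observe that the scalar map $x\mapsto (x+\sqrt{\lambda_{\min}\lambda_{\max}})/(\sqrt{\lambda_{\min}}+\sqrt{\lambda_{\max}})$ appearing in (\ref{monotonicity eigenvalues}) is increasing in $x$, so the largest eigenvalue of $I*\Sigma$ is attained at $\lambda_i(\Sigma)=\lambda_{\max}$ and the smallest at $\lambda_i(\Sigma)=\lambda_{\min}$. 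Substituting these values and factoring the numerators as $\lambda_{\max}+\sqrt{\lambda_{\min}\lambda_{\max}}=\sqrt{\lambda_{\max}}(\sqrt{\lambda_{\max}}+\sqrt{\lambda_{\min}})$ and $\lambda_{\min}+\sqrt{\lambda_{\min}\lambda_{\max}}=\sqrt{\lambda_{\min}}(\sqrt{\lambda_{\min}}+\sqrt{\lambda_{\max}})$ produces the cancellations
\begin{equation}
\lambda_{\max}(I*\Sigma)=\sqrt{\lambda_{\max}(\Sigma)},\qquad \lambda_{\min}(I*\Sigma)=\sqrt{\lambda_{\min}(\Sigma)}.
\end{equation}

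Next I would invoke the fact that the extreme eigenvalues of a Hermitian matrix are monotone with respect to the L\"owner order. Via the Courant--Fischer/Rayleigh quotient characterization, $\lambda_{\max}(\Sigma)=\max_{\|v\|=1}v^*\Sigma v$ and $\lambda_{\min}(\Sigma)=\min_{\|v\|=1}v^*\Sigma v$; hence if $\Sigma_1\preceq\Sigma_2$, then $v^*\Sigma_1 v\leq v^*\Sigma_2 v$ for every $v$, and taking the maximum (respectively minimum) over unit vectors yields $\lambda_{\max}(\Sigma_1)\leq\lambda_{\max}(\Sigma_2)$ and $\lambda_{\min}(\Sigma_1)\leq\lambda_{\min}(\Sigma_2)$. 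Finally, since $t\mapsto\sqrt{t}$ is increasing on $(0,\infty)$, combining this with the closed forms above gives $\lambda_{\max}(I*\Sigma_1)=\sqrt{\lambda_{\max}(\Sigma_1)}\leq\sqrt{\lambda_{\max}(\Sigma_2)}=\lambda_{\max}(I*\Sigma_2)$, and the identical chain of inequalities for $\lambda_{\min}$, which is exactly the asserted monotonicity.

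I do not anticipate a genuine obstacle here: the only step requiring insight is the algebraic simplification collapsing the extreme eigenvalues of $I*\Sigma$ to the square roots of the extreme eigenvalues of $\Sigma$, after which everything reduces to standard facts. The one point worth flagging is that $\lambda_{\min}$ and $\lambda_{\max}$ in (\ref{monotonicity eigenvalues}) are themselves functions of $\Sigma$, so $F$ is genuinely nonlinear and, as the surrounding discussion notes, not monotone as a matrix map; the content of the proposition is precisely that this nonlinearity cancels cleanly at the two extreme eigenvalues even though it does not for the intermediate ones.
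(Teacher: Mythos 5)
Your proof is correct and follows essentially the same route as the paper: both derive the closed forms $\lambda_{\max}(I*\Sigma)=\sqrt{\lambda_{\max}(\Sigma)}$ and $\lambda_{\min}(I*\Sigma)=\sqrt{\lambda_{\min}(\Sigma)}$ from (\ref{monotonicity eigenvalues}) and then conclude by the monotonicity of the extremal eigenvalues and of the square root. The only difference is that you spell out the Courant--Fischer justification, which the paper leaves implicit in the phrase ``both of which are seen to be monotone functions of $\Sigma$.''
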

\begin{proof}
Considering the cases $i=1$ and $i=n$, we find that
(\ref{monotonicity eigenvalues}) yields
\begin{equation}
\lambda_{\min}(I*\Sigma) = \sqrt{\lambda_{\min}(\Sigma)} \quad \mathrm{and} \quad \lambda_{\max}(I*\Sigma) = \sqrt{\lambda_{\max}(\Sigma)},
\end{equation}
both of which are seen to be monotone functions of $\Sigma$. 
\qed
\end{proof}
It is in the sense of the above that $\mu(A,B)=A*B$ inherits a weak monotonicity property. The monotonic dependence of the extremal eigenvalues of $I*\Sigma$ on $\Sigma$ ensures that if $\Sigma_1\preceq \Sigma_2$, then we can at least rule out the possibility that $I*\Sigma_1 \succ I*\Sigma_2$, where $\succ 0$ here denotes positive definiteness. To prove that monotonicity is generally not satisfied in the full sense of condition (v), consider a diagonal matrix $\Sigma=\operatorname{diag}(a,b,x)\in\mathbb{P}(3)$, where $\lambda_{\min}(\Sigma)=a < b \leq x = \lambda_{\max}(\Sigma)$ and $x$ is thought of as a variable. We have $I*\Sigma = \operatorname{diag}\left(\sqrt{a},f(x),\sqrt{x}\right)$,
where 
\begin{equation}
\lambda_2(I*\Sigma)=f(x):=\frac{b+\sqrt{ax}}{\sqrt{a}+\sqrt{x}}.
\end{equation}
Taking the derivative of $f$ with respect to $x$, we find that 
\begin{equation}
f'(x)=\frac{a-b}{2\sqrt{x}(\sqrt{a}+\sqrt{x})^2} < 0, \quad \forall x \geq b,
\end{equation}
which shows that the second eigenvalue of $I*\Sigma$ decreases as $x$ increases. Thus, we see that $I*\Sigma$ cannot depend monotonically on $\Sigma$ in this example. 

\subsection{A geometric scaling property}

Before completing this section on the midrange $\mu(A,B)=A*B$ of a pair of positive definite matrices, we note a key scaling property satisfied by $\mu$ which suggests that it is a plausible candidate as a scalable substitute for the standard matrix geometric mean $A\#B$.

\begin{proposition}
For any real scalars $a,b>0$ and matrices $A,B\in\mathbb{P}(n)$, we have 
\begin{equation} \label{geo scaling prop}
(aA)*(bB)=\sqrt{ab}(A*B).
\end{equation}
\end{proposition}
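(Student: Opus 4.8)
The plan is to prove the identity by direct computation from the closed-form expression (\ref{star}). The only quantity in that formula that depends on the arguments in a nontrivial way is the pair of extreme eigenvalues, so the first step is to track how these transform under the rescaling $A\mapsto aA$, $B\mapsto bB$.

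First I would observe that
\[
(bB)(aA)^{-1}=\frac{b}{a}\,BA^{-1},
\]
so by Lemma~\ref{shift} (applied with $c_1=b/a$ and $c_2=0$) the eigenvalues of $(bB)(aA)^{-1}$ are exactly $b/a$ times those of $BA^{-1}$. Writing $\lambda_{\min},\lambda_{\max}$ for the extreme eigenvalues of $BA^{-1}$, the corresponding extreme eigenvalues entering the formula for $(aA)*(bB)$ are therefore $\tilde\lambda_{\min}=(b/a)\lambda_{\min}$ and $\tilde\lambda_{\max}=(b/a)\lambda_{\max}$.

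Then I would substitute into (\ref{star}). The normalizing denominator becomes $\sqrt{\tilde\lambda_{\min}}+\sqrt{\tilde\lambda_{\max}}=\sqrt{b/a}\,\bigl(\sqrt{\lambda_{\min}}+\sqrt{\lambda_{\max}}\bigr)$, while the coefficient of the $A$-term acquires $\sqrt{\tilde\lambda_{\min}\tilde\lambda_{\max}}=(b/a)\sqrt{\lambda_{\min}\lambda_{\max}}$. Substituting $bB$ and $aA$ for the two matrix arguments and pulling out a common factor of $b$ from the bracket gives
\[
(aA)*(bB)=\frac{b}{\sqrt{b/a}\,\bigl(\sqrt{\lambda_{\min}}+\sqrt{\lambda_{\max}}\bigr)}\left(B+\sqrt{\lambda_{\min}\lambda_{\max}}\,A\right).
\]
The scalar prefactor then simplifies via $b/\sqrt{b/a}=\sqrt{ab}$, and comparing with (\ref{star}) yields $(aA)*(bB)=\sqrt{ab}\,(A*B)$, as claimed.

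I do not expect any genuine obstacle here: the result is an elementary algebraic identity once the eigenvalue scaling has been recorded. The only point requiring a little care is the bookkeeping of the square roots, and in particular verifying that the factor of $b$ extracted from the matrix terms and the factor $\sqrt{a/b}$ coming from the inverted denominator combine into the symmetric prefactor $\sqrt{ab}$. I would also remark on the degenerate case $\lambda_{\min}=\lambda_{\max}$, governed by the second branch of (\ref{Nussbaum matrix}); since (\ref{star}) remains valid in that case and the eigenvalues still scale by $b/a$, the same argument applies verbatim, so no separate treatment is needed.
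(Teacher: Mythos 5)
Your proof is correct and follows exactly the paper's approach: the paper's one-line proof is precisely the observation that the eigenvalues of $(bB)(aA)^{-1}$ are $b/a$ times those of $BA^{-1}$, substituted into (\ref{star}); you simply carry out the resulting algebra (including the simplification $b/\sqrt{b/a}=\sqrt{ab}$) explicitly. The remark on the degenerate case $\lambda_{\min}=\lambda_{\max}$ is a harmless extra check that the paper omits.
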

\begin{proof}
The result follows upon substituting $\lambda_i\left((bB)(aA)^{-1}\right)=\frac{b}{a}\lambda_i(BA^{-1})$ into the formula (\ref{star}).
\qed
\end{proof}

The scaling in (\ref{geo scaling prop}) of course does not generally hold for a mean of two matrices. Indeed, it does not generally hold for means arising as $d_{\infty}$-midpoints either. For instance, \cite{Lim2013} identifies 
\begin{equation} \label{diamond}
A\diamond B = 
\begin{dcases}
\frac{\sqrt{\lambda_{\max}}}{1+\lambda_{\max}}(A+B) \quad \mathrm{if} \quad \lambda_{\min}\lambda_{\max} \geq 1, \\
\frac{\sqrt{\lambda_{\min}}}{1+\lambda_{\min}}(A+B) \quad \mathrm{if} \quad \lambda_{\min}\lambda_{\max} \leq 1,
\end{dcases}
\end{equation}
as another $d_{\infty}$-midpoint of $A$ and $B$ corresponding to the midpoint of a different $d_{\infty}$ minimal geodesic to the one considered in this paper. It is clear that (\ref{diamond}) does not scale geometrically as in (\ref{geo scaling prop}).
As a summary, we collect the key results of this section in the following theorem.

\begin{theorem}
The mean $\mu(A,B)=A*B$ defined in (\ref{star}) yields a $d_{\infty}$-midpoint of $A,B\in\mathbb{P}(n)$ that is continuous, symmetric, affine-invariant, and scales geometrically as in (\ref{geo scaling prop}). 
Moreover, if $A\preceq  B$, then $A\preceq \mu(A,B) \preceq  B$, and the extremal eigenvalues of $\mu(I,\Sigma)$ depend monotonically on $\Sigma\in\mathbb{P}(n)$.
\end{theorem}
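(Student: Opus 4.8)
The plan is to recognize that this theorem is a consolidation of results already established in this section, so the proof amounts to assembling them and supplying the three properties that were only asserted. The $d_{\infty}$-midpoint property is immediate: by construction $A*B = \phi(1/2; A, B)$ is the midpoint of the Nussbaum curve (\ref{Nussbaum matrix}), which is a minimal geodesic for the Thompson metric $d_T = d_{\infty}$, so it satisfies $d_{\infty}(A, A*B) = d_{\infty}(A*B, B) = \frac{1}{2}d_{\infty}(A,B)$. The geometric scaling relation (\ref{geo scaling prop}), the order property under $A \preceq B$, and the monotone dependence of the extremal eigenvalues of $\mu(I,\Sigma)$ on $\Sigma$ are precisely the contents of the three propositions proved above, so for each of these I would simply invoke the corresponding result.

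The remaining work is to verify continuity, symmetry, and affine-invariance directly from the closed form (\ref{star}); these were described as straightforward, and I would supply the short arguments. For affine-invariance, the key observation is that $(XBX^*)(XAX^*)^{-1} = X(BA^{-1})X^{-1}$ is similar to $BA^{-1}$ and hence shares the same extremal eigenvalues $\lambda_{\min}, \lambda_{\max}$; substituting into (\ref{star}) and factoring $X(\cdot)X^*$ out of $XBX^* + \sqrt{\lambda_{\min}\lambda_{\max}}\,XAX^*$ yields $X(A*B)X^*$. For symmetry, I would note that the spectrum of $AB^{-1}$ consists of the reciprocals of the spectrum of $BA^{-1}$, so in computing $B*A$ the extremal eigenvalues become $1/\lambda_{\max}$ and $1/\lambda_{\min}$; inserting these into (\ref{star}) and clearing denominators returns exactly the expression for $A*B$.

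For continuity, I would use that $(A,B) \mapsto \lambda_{\min}(BA^{-1})$ and $(A,B) \mapsto \lambda_{\max}(BA^{-1})$ are continuous maps on $\mathbb{P}(n)\times\mathbb{P}(n)$, and that (\ref{star}) is a continuous function of $A$, $B$, and these two extremal eigenvalues, so continuity follows by composition. The only point meriting attention is the coalescence locus $\lambda_{\min} = \lambda_{\max}$, where the underlying geodesic (\ref{Nussbaum matrix}) is defined by separate branches; but the single formula (\ref{star}) was already shown to agree with both branches at $t = 1/2$, so no discontinuity is introduced there.

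I do not anticipate a substantial obstacle, since the theorem is essentially a bookkeeping statement and every ingredient either has been proven or reduces to an elementary spectral computation. If anything requires care, it is confirming that the reciprocal-spectrum and similarity observations are applied correctly in the symmetry and affine-invariance arguments, and that the closed form (\ref{star}) genuinely removes the apparent case distinction in (\ref{Nussbaum matrix}) so that continuity holds across the degenerate locus.
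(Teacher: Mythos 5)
Your proposal is correct and follows essentially the same route as the paper, which presents this theorem explicitly as a summary of the preceding propositions and leaves properties (i)--(iii) as "relatively straightforward" verifications. The details you supply for symmetry (reciprocal spectrum of $AB^{-1}$), affine-invariance (similarity of $(XBX^*)(XAX^*)^{-1}$ to $BA^{-1}$), and continuity across the coalescence locus are all sound and fill in exactly what the paper omits.
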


\section{The midrange problem in $\mathbb{R}_+$ and $\mathbb{R}_+^n$}
\label{sec: 3}

It is instructive to consider the $N$-point affine-invariant midrange problem in the scalar and vector cases, corresponding to the cones $\mathbb{R}_+$ and $\mathbb{R}_+^n$, respectively. In the scalar case, we are given $N$ positive numbers $y_i>0$ that can be ordered such that $\min_i y_i \leq y_k \leq \max_i y_i$ for each $k=1, \ldots, N$. By the monotonicity of the $\log$ function, we have
$\log\left(\min_i y_i\right) \leq \log y_k \leq \log\left(\max_i y_i\right)$.
The midrange is uniquely given by
\begin{equation} \label{scalar}
x=\exp\left(\frac{1}{2}\left[\log\left(\min_i y_i\right)+\log\left(\max_i y_i\right)\right]\right) = \left(\min_i y_i \cdot \max_i y_i\right)^{1/2}.
\end{equation}
Note that (\ref{scalar}) is the unique solution of the optimization problem
\begin{equation}
\min_{x>0} \; \max_i \; |\log x - \log y_i|.
\end{equation}
In the vector case, the midrange problem in $\mathbb{R}^n_+$ takes the form
\begin{equation} \label{vector midrange problem}
\min_{\boldsymbol{x}>0} \; \max_i \; \|\log\boldsymbol{x}-\log\boldsymbol{y}_i\|_{\infty} := \min_{\boldsymbol{x}>0} \; \max_i \; \max_a \;|\log x^a-\log y^a_i| ,
\end{equation}
where $\boldsymbol{x}>0$ means that $\boldsymbol{x}=(x^a)$ satsifies $x^a>0$ for $a=1,\ldots,n$ and $\boldsymbol{y_i}$ are a collection of $N$ given points in $\mathbb{R}^n_+$. As in the matrix case, the optimum cost $t^{\star}$ has a lower bound 
\begin{equation} \label{lower vector}
l=\frac{1}{2}\max_{i,j}\|\log\boldsymbol{y}_i-\log\boldsymbol{y}_j\|_{\infty}.
\end{equation}

\begin{proposition}
The lower bound (\ref{lower vector}) is attained by $\boldsymbol{x}=(x^a)\in\mathbb{R}^n_+$ given by
\begin{equation} \label{vector midpoint}
x^a=\left(\min_i y_i^a \cdot \max_i y_i^a\right)^{1/2}.
\end{equation}
\end{proposition}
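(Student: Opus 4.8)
The plan is to exploit the separable structure that the $\ell^\infty$ norm imposes on the objective. Passing to logarithmic coordinates, I would write $u^a := \log x^a$ and $v_i^a := \log y_i^a$; by monotonicity of $\log$ one has $\min_i v_i^a = \log\left(\min_i y_i^a\right)$ and $\max_i v_i^a = \log\left(\max_i y_i^a\right)$, so the proposed point corresponds to $u^a = \tfrac{1}{2}\left(\min_i v_i^a + \max_i v_i^a\right)$, which is precisely the scalar midrange of the numbers $\{v_i^a\}_i$ in each fixed coordinate $a$.

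The key observation is that, because $\|\cdot\|_{\infty}$ is a coordinatewise maximum, the two maxima in $\max_i \max_a |u^a - v_i^a|$ may be interchanged, so the objective decouples into $n$ independent scalar midrange problems, one per coordinate. For each fixed $a$, the scalar analysis recalled around (\ref{scalar}) shows that the midrange point $u^a$ satisfies $\max_i |u^a - v_i^a| = \tfrac{1}{2}\left(\max_i v_i^a - \min_i v_i^a\right)$, since the midpoint is equidistant from the two extreme values while every other $v_i^a$ lies between them.

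It then remains to take the outer maximum over $a$ and match it against $l$. I would compute $\max_i \|\log\boldsymbol{x} - \log\boldsymbol{y}_i\|_{\infty} = \max_a \tfrac{1}{2}\left(\max_i v_i^a - \min_i v_i^a\right)$, and on the other side rewrite the bound as $l = \tfrac{1}{2}\max_a \max_{i,j} |v_i^a - v_j^a| = \tfrac{1}{2}\max_a \left(\max_i v_i^a - \min_i v_i^a\right)$, again reordering the maxima so that the per-coordinate spread appears. The two expressions coincide, giving the claimed equality; since $l$ was already established as a lower bound on the optimal cost, the proposed $\boldsymbol{x}$ is in fact a minimizer.

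This argument is essentially routine, and I expect no serious obstacle: the entire content lies in the separability forced by the $\ell^\infty$ norm, which reduces everything to the one-dimensional case treated in (\ref{scalar}). The only point requiring minor care is the consistent reordering of the $\max_i$ and $\max_a$ operations in both the objective and in the definition of $l$, together with the elementary identity that $\max_{i,j} |v_i^a - v_j^a|$ equals the spread $\max_i v_i^a - \min_i v_i^a$ within each coordinate.
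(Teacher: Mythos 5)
Your argument is correct: the paper in fact omits the proof of this proposition entirely (treating it as routine), and your coordinatewise decoupling of the $\ell^\infty$ objective followed by the scalar midrange computation of (\ref{scalar}) in each coordinate, matched against the reordered expression for $l$, is exactly the intended argument. The only implicit ingredient is that $l$ is indeed a lower bound on the optimal cost, which follows from the same triangle-inequality argument given for the matrix case in Subsection \ref{subsec 2}, and which you correctly invoke.
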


\section{The $N$-point midrange problem in $\mathbb{P}(n)$}
\label{sec: 4}

In the matrix setting, the midrange problem (\ref{midrange}) takes the form 
\begin{align}  \label{matrix midrange}
\min_{X \succeq 0} \; \max_i \; \|\log(Y_i^{-1/2}XY_i^{-1/2})\|_{\infty},
\end{align}
which can be rewritten as
\begin{equation}  \label{matrix midrange 2}
\begin{cases}
\min_{X \succeq 0,t} \; t \\
e^{-t}Y_i \preceq X \preceq e^{t}Y_i
\end{cases}
\end{equation}
While this problem is not convex due to the presence of the $\log$ function, the feasibility condition $e^{-t}Y_i \preceq X \preceq e^{t}Y_i$ is convex for \emph{fixed} $t$ and can be solved using standard convex optimization packages such as CVX \cite{cvx}. Given a $t$ that is greater than or equal to the optimum value $t^{\star}=\min_X  \max_i \; d_{\infty}(X,Y_i)$, we can solve (\ref{matrix midrange 2}) by successively solving the feasibility condition as we decrease $t$. In the bisection method it is very desirable to have a good estimate for the initial $t$ as the successive reductions in $t$ can be quite slow. In particular, if the lower bound $l=\frac{1}{2}\operatorname{diam}(\{Y_i\})$ is attained as in the vector case, then we can solve (\ref{matrix midrange 2}) in one step by taking $t=l$ and solving the feasibility condition once. Unfortunately, and rather remarkably, numerical examples show that unlike the scalar and vector cases, the lower bound $l$ is not always attained in the affine-invariant matrix midrange problem. 

\begin{proposition}
The lower bound $l=\frac{1}{2}\operatorname{diam}_{\infty}(\{Y_i\})$ is not necessarily attained in (\ref{matrix midrange}).
\end{proposition}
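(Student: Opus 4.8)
The plan is to establish the claim by exhibiting an explicit finite configuration $\{Y_i\}\subset\mathbb{P}(n)$ for which the optimal cost $t^{\star}$ strictly exceeds $l$. The starting observation is that, by the reformulation (\ref{matrix midrange 2}), the bound $l$ is attained precisely when the feasibility system $e^{-l}Y_i\preceq X\preceq e^{l}Y_i$, $i=1,\ldots,N$, admits a solution $X\succ 0$; equivalently, when the Thompson balls $\bar B(Y_i,l)=\{X:d_{\infty}(X,Y_i)\le l\}$ have a common point. The whole question therefore reduces to the nonemptiness of $\bigcap_i\bar B(Y_i,l)$.

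Next I would record why failure of attainment is a genuinely matrix phenomenon. Pairwise intersections are always nonempty: for any two indices a $d_{\infty}$-midpoint $M_{ij}$ of $Y_i,Y_j$ (for instance $Y_i\# Y_j$, or $Y_i*Y_j$ from (\ref{star})) satisfies $d_{\infty}(M_{ij},Y_i)=d_{\infty}(M_{ij},Y_j)=\tfrac12 d_{\infty}(Y_i,Y_j)\le l$, so $M_{ij}\in\bar B(Y_i,l)\cap\bar B(Y_j,l)$. In the scalar and vector cases the $\ell^{\infty}$ structure on the logarithms makes each ball a box, and boxes enjoy the Helly/hyperconvexity property that pairwise intersection forces common intersection — exactly what makes (\ref{vector midpoint}) attain $l$. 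The proof thus hinges on showing that this property fails on $\mathbb{P}(n)$ for $n\ge 2$, where the L\"owner order is not a lattice. Accordingly I would build the example from \emph{non-commuting} matrices, noting first that any commuting family is simultaneously diagonalizable and hence reduces to the vector problem, for which $l$ is attained; non-commutativity is therefore essential. A convenient candidate is three matrices in $\mathbb{P}(2)$, e.g.\ a fixed $\operatorname{diag}(\lambda,\lambda^{-1})$ together with two rotated copies, chosen so that the three pairwise midpoints are mutually far apart in the Thompson metric; with $l$ read off from the (by symmetry, equal) pairwise distances, the goal is to show the system $e^{-l}Y_i\preceq X\preceq e^{l}Y_i$ has no positive definite solution.

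The main obstacle is this last infeasibility verification, since I want a certificate rather than a purely numerical report. I would produce a Farkas-type dual certificate for the linear matrix inequality system: positive semidefinite multipliers $Z_i,W_i\succeq 0$, not all zero, with $\sum_i(Z_i-W_i)=0$ (so the coefficient of $X$ cancels) and $\sum_i\big(e^{l}\operatorname{tr}(W_iY_i)-e^{-l}\operatorname{tr}(Z_iY_i)\big)<0$. Pairing the two families of constraints against $Z_i$ and $W_i$ under the trace inner product, any feasible $X$ would force the left-hand sum to be nonnegative, a contradiction; hence the system is infeasible and $t^{\star}>l$.

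Exhibiting such multipliers explicitly — or, failing a clean closed form, computing $t^{\star}>l$ to certified numerical accuracy and checking the strict separation — is the real work. Everything preceding it is structural: the reduction to ball intersection via (\ref{matrix midrange 2}), the always-valid pairwise intersection from the midpoint property, and the contrast with the hyperconvex $\ell^{\infty}$ geometry of the vector case.
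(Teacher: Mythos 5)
Your structural reduction is sound and, in fact, more than the paper offers: the paper supplies no proof of this proposition at all, merely asserting that ``numerical examples show'' non-attainment. Your observations that attainment of $l$ is equivalent to nonemptiness of $\bigcap_i\{X : e^{-l}Y_i\preceq X\preceq e^{l}Y_i\}$, that pairwise intersections are always nonempty via the midpoint $Y_i * Y_j$ or $Y_i\# Y_j$, that commuting families reduce to the (always-attained) vector case, and that a Farkas-type multiplier system $Z_i,W_i\succeq 0$, $\sum_i(Z_i-W_i)=0$, $\sum_i\bigl(e^{l}\operatorname{tr}(W_iY_i)-e^{-l}\operatorname{tr}(Z_iY_i)\bigr)<0$ certifies infeasibility are all correct (you only invoke the easy direction of the LMI alternative, which needs no constraint qualification).

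The genuine gap is that the proof never materializes: for a non-attainment statement the entire content is the counterexample, and you explicitly defer it (``a convenient candidate is \ldots chosen so that \ldots''; ``exhibiting such multipliers explicitly \ldots is the real work''). No specific matrices $Y_1,Y_2,Y_3$ are written down, $l$ is not computed for any concrete instance, and no certificate $\{Z_i,W_i\}$ (nor even a verified numerical value $t^{\star}>l$ for a stated instance) is produced. As it stands the argument establishes only that non-attainment is \emph{not ruled out} by pairwise considerations, which is weaker than the proposition. To close the gap you must commit to an explicit configuration --- your proposed family $Y_1=\operatorname{diag}(\lambda,\lambda^{-1})$ together with two rotations $R_\theta Y_1 R_\theta^{*}$ is a reasonable one --- compute the pairwise Thompson distances to fix $l$, and then either exhibit the multipliers in closed form or give a checkable interval-arithmetic verification that the system $e^{-l}Y_i\preceq X\preceq e^{l}Y_i$ is infeasible. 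Until one such instance is verified, the proposition is not proved.
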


The above result suggests that the $N$-point matrix midrange problem is more challenging than the vector case in fundamental ways. While the bisection method offers a solution to this problem, we expect that significantly more efficient solutions to the problem can be found. In particular, we expect to find algorithms that rely principally on the computation of dominant generalized eigenpairs that would be considerably more efficient and scalable than existing algorithms for computing matrix geometric means, as in the $N=2$ case.

%
% ---- Bibliography ----
%
% BibTeX users should specify bibliography style 'splncs04'.
% References will then be sorted and formatted in the correct style.
%
\bibliographystyle{splncs04}
 \bibliography{references}

\begin{thebibliography}{1}
\providecommand{\url}[1]{\texttt{#1}}
\providecommand{\urlprefix}{URL }
\providecommand{\doi}[1]{https://doi.org/#1}

\bibitem{Bhatia2003}
Bhatia, R.: On the exponential metric increasing property. Linear Algebra and
  its Applications  \textbf{375},  211 -- 220 (2003)

\bibitem{Bhatia}
Bhatia, R.: Positive Definite Matrices. Princeton University Press (2007)

\bibitem{cvx}
Grant, M., Boyd, S.: {CVX}: Matlab software for disciplined convex programming,
  version 2.1. \url{http://cvxr.com/cvx} (Mar 2014)

\bibitem{Kubo1980}
Kubo, F., Ando, T.: Means of positive linear operators. Mathematische Annalen
  \textbf{246}(3),  205--224 (1980)

\bibitem{Lim2013}
Lim, Y.: Geometry of midpoint sets for {T}hompson's metric. Linear Algebra and
  its Applications  \textbf{439}(1),  211 -- 227 (2013)

\bibitem{Nussbaum1994}
Nussbaum, R.D.: Finsler structures for the part metric and {H}ilbert's
  projective metric and applications to ordinary differential equations.
  Differential Integral Equations  \textbf{7}(5-6),  1649--1707 (1994)

\bibitem{Thompson1963}
Thompson, A.C.: On certain contraction mappings in a partially ordered vector
  space. Proceedings of the American Mathematical Society  \textbf{14}(3),
  438--443 (1963)

\end{thebibliography}

\end{document}